% !TeX spellcheck = en_US
\documentclass[11pt, reqno]{amsart}

%%%%%%%%%%%%%%%%%%%%%%%%%%%%%%%%%%%

\usepackage[utf8]{inputenc}
\usepackage[T2A]{fontenc}
\usepackage[ukrainian,russian,english]{babel}

%%%%%%%%%%%%%%%%%%%%%%%%%%%%%%%%%%%

\usepackage[english]{d-omega3}

\usepackage{amssymb}
\usepackage{amscd}
\usepackage{graphicx}
\usepackage{xcolor}
\usepackage[all]{xy}

\DeclareMathOperator{\arccot}{arccot}
\DeclareMathOperator{\arccoth}{arcoth}
\newcommand{\Int}{\text {int} \ }

\newcommand{\F}{\mathcal F} 

\newcommand{\R}{\mathbb R} 
\newcommand{\Z}{\mathbb Z}

\begin{document}
	
	\author[D.~  Bolotov]{Dmitry V. Bolotov}
	\email{bolotov@ilt.kharkov.ua}
	\address{B. Verkin Institute for Low Temperature Physics and Engineering of the
		National Academy of Sciences of Ukraine, 47 Nauky Ave., Kharkiv, 61103, Ukraine}
		\orcid{0000-0002-8542-9695}
		\title[Dual Thurson norm of  Euler classes of foliations on 3-Manifolds]{ Dual Thurston norm of    Euler classes of foliations   on negative curvature 3-Manifolds}
		
	\abstract{english}{%
 In this paper we give an upper bound estimate on the dual Thurston norm of the Euler class of an arbitrary smooth foliation $\F$ of dimension one defined on a closed three-dimensional orientable manifold $M^3$ of negative curvature, which depends on the constants bounded the injectivity radius $inj(M^3)$, the volume $Vol(M^3)$,  sectional curvature of the manifold $M^3$ and the mean curvature modulus of the leaves of the foliation $\F$. 
	}
%	\abstract{ukrainian}{
	%	У цій статті дано верхню оцінку подвійної норми Терстона класу Ейлера довільного гладкого шарування $\F$ розмірності один, визначеного на замкненому  тривимірному орієнтованому многовиді $M^3$ від'ємної кривини, яка залежить від констант, обмежуючих радіус ін'єк\-тивності $inj(M^3)$, об'єм $Vol(M^3)$, секційну кривину многовиду $M^3$ та  модуль  середньої кривини шарів шарування $\F$.  }
	%\abstract{russian}{Анотация на русском языке.}
	
	\keywords{foliations, 3-Manifolds, mean curvature, Euler class}
	\shortAuthorsList{D.Bolotov}
	%\udc{Universal Decimal Classification number}
	\msc{53C12; 57R30; 53C20.}
	%\thanks{This research is supported by ...}

	\maketitle
	%\left( 
	%===============================================================================
	%   Text of the article
	%===============================================================================

	\section{Introduction}\label{Int}
	
	Let $ (M^3, g) $ be a closed oriented three-dimensional Riemannian manifold and  $ \F $ be a transversely oriented $ C ^{\infty} $-smooth foliation of codimension one on $ M^3 $. 
	Recall that a foliation $\F $ is {\it taut}  if its leaves are minimal submanifolds of $ M^3 $ for some Riemannian metric on $ M^3 $. D. Sullivan \cite{Sul} proved that a foliation is taut if and only if each leaf of $\F$ is intersected by a transversal closed curve, which   is equivalent to $\F$  does not contain generalized Reeb components.  	Recall, that  a saturated (consisting of the leaves of $\F$) set $ A $  of a three-dimensional compact orientable foliated 3-manifold $ M^3 $ is called a {\it generalized Reeb component} if $ A $ is a connected three-dimensional manifold with a boundary $\partial A$ and any transversal to $ \F $ vector field restricted to   $\partial A $ is directed either everywhere inwards or everywhere outwards of the generalized Reeb component $ A $. In particular, the Reeb component (see \cite {T}), i.e. foliated solid torus all leaves  of which except for the boundary  are homeomorphic  to the plane (see Fig.\ref{Reeb3}),  is a generalized Reeb component. It is clear that $ \partial {A} $ consists of a finite set of compact leaves of the foliation $ \F $.  It is not difficult to show that $ \partial A $ is a family of tori (see \cite {Good}).

	Recall that 3-manifold $M^3$ is called {\it irreducible} if each an embedded sphere bounds  a ball in $M^3$. In particular,   $\pi_2(M^3)=0$.   Note,  that   if $M^3$ admits a taut foliation, then $M^3$ is irreducible \cite{Nov}.
	
	W. Thurston has proved  \cite{Th}  that for each    closed embedded orientable surface $M^2\subset M^3$ which is different from $S^2$  the value of the Euler class $e(T\F)$ on the class $[M^2]$ in the case of a taut $\F$ satisfies  the following estimate:
	
	\begin{equation}\label{Chi}
		|e(T\F)[M^2]|\leq - \chi (M^2). 
	\end{equation}
	
	Since any integer homology class $H_2(M^3;\Z)$ can be represented by a closed oriented surface (see subsection \ref{harm}) the inequality above gives a bound for the possible values of the cohomology class $e(T\F)$ on the generators of $H_2(M^3;\Z)$ and  therefore  the number of cohomological classes $ H^2 (M^3;\Z) $ realized as Euler classes $e(T\F)$ of the tangent distribution to   $ \mathcal F $ is finite. 
 The inequality \eqref{Chi} means that 
	the dual Thurson norm  (see subsection \ref{3.2.}) of the Euler class $e(T\F)$ of a taut foliation  $ \mathcal F $  on a  closed oriented  irreducible  atoroidal  (containing no incompressible tori) 3-manifold  $M^3$  satisfies:
		\begin{equation}\label{Chi1} 
			||e(T\F)||^*_{Th}\leq 1. 
		\end{equation}		
	
	\begin{remark}
		A 3-manifold $M^3$ of negative sectional curvature is an example of an irreducible  atoroidal 3-manifold.
		\end{remark}
		
		\begin{remark}
	Note, that taut foliations play an important role in three-dimensional Seiberg-Witten theory. Namely, the Euler class $e(T\F)$  of a such foliation  can be interpreted as a monopole class  \cite{KM}.
	
	\end{remark}

	 In  \cite {B1} we  proved the following result.
	
	\begin{theorem} \label{result1}
		
		Let $ V_0> 0, i_0> 0, K_0 \geq 0 $ be fixed constants, and $ M^3 $ be a closed oriented three-dimensional Riemannian manifold with the following properties:
		\begin{enumerate}
			\item the volume $Vol(M^3)\leq V_0$; 
			\item the sectional curvature $K$  of  $M$  satisfies the  inequality $K  \leq K_0$;
			\item  $\min \{inj(M^3),\frac{\pi}{2\sqrt{K_0}}\} \geq i_0 $, where $ inj (M^3) $ is the injectivity   radius of  of  $ M^3 $.
		\end{enumerate}
		Let us set
		\[ H_0 = \begin{cases}
			\min \{{\frac {2\sqrt{3}i_0^2}{V_0},\sqrt[3]{\frac{2\sqrt{3}}{V_0}}}\},     & \text{if $K_0 = 0$,}\\
			\min \{ {\frac {2\sqrt{3}i_0^2}{V_0}, x_0 } \},  &  \text{if $K_0 >0$,}
		\end{cases} \]
		where $x_0$ is the root of the equation
		$$ \frac{1}{ K_0 }\arccot^2\frac{x}{\sqrt{K_0}} - \frac{V_0}{2\sqrt{3}}x=0.  $$
		Then any smooth transversely oriented foliation $ \F $ of codimension  one on $ M^3 $, such that the modulus of the mean curvature $H$ of its leaves  satisfies the inequality $ | H | <H_0 $,
		should be taut, in particular, have minimal leaves for some Riemannian metric on $ M^3 $.
		
	\end{theorem}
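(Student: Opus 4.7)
I would argue by contradiction: assume $\F$ with $|H|<H_0$ is not taut, and derive a geometric contradiction with Sullivan's criterion. By \cite{Sul}, failure of tautness produces a generalized Reeb component $A\subset M^3$ whose boundary $\partial A$ is a nonempty union of compact toral leaves along which the transverse orientation points uniformly into (or uniformly out of) $A$. Fix such a boundary torus $T\subset\partial A$ and an interior non-compact leaf $L\subset \Int A$ that accumulates onto $T$ through the holonomy of the Reeb component.

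The argument then combines two pieces, one local and one global. \emph{Locally}, for any $p\in L$ and any radius $r$ not exceeding $\min\{inj(M^3),\pi/(2\sqrt{K_0})\}$, the Rauch/Hessian comparison theorem, applied under $K\leq K_0$ and the mean-curvature bound $|H|<H_0$, yields an explicit lower bound
\[
	\text{Area}\bigl(L\cap B_r(p)\bigr)\;\geq\; \Phi(r,H_0,K_0),
\]
where $\Phi$ is the area of a geodesic disk of comparison radius $r$ in the model space of constant curvature $K_0$ truncated by the mean-curvature cut-off $H_0$. In the flat case $K_0=0$ a disk of intrinsic radius $i_0$ contributes area at least $2\sqrt{3}\,i_0^2$, which accounts for the constant $2\sqrt{3}$ in the statement. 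For $K_0>0$ the normal exponential map of a surface with $|H|<H_0$ in a manifold of $\sec\leq K_0$ stays an immersion only up to intrinsic radius $\frac{1}{\sqrt{K_0}}\arccot(H_0/\sqrt{K_0})$, which is precisely how the $\arccot$ term enters the defining equation for $x_0$.

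\emph{Globally}, because $L$ accumulates onto $T$, one can extract arbitrarily many pairwise disjoint topological disks $D_1,\dots,D_N\subset L$, each of intrinsic radius $\approx \min\{i_0,\frac{1}{\sqrt{K_0}}\arccot(H_0/\sqrt{K_0})\}$. Each $D_k$ sits inside a thin tubular neighbourhood in $M^3$ whose volume is comparable to $\text{Area}(D_k)$, with proportionality controlled by $H_0$ and the curvature comparison. Disjointness forces $\sum_k \text{Vol}(\text{tube around }D_k)\leq V_0$; combined with the local area estimate this rearranges to $H_0\leq \frac{2\sqrt{3}\,i_0^2}{V_0}$ in the flat case and to the implicit $\arccot$-equation defining $x_0$ in the positively curved case, contradicting $|H|<H_0$.

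\emph{Main obstacle.} The delicate step is quantifying the accumulation of $L$ onto $T$: converting the purely topological fact that $L$ spirals infinitely often inside $A$ into a packing by disjoint disks of \emph{definite} intrinsic size, each carrying area at least $\Phi(r,H_0,K_0)$. This requires careful control of the transverse holonomy of $\F$ near $T$ using only the mean-curvature bound, and it is in balancing this holonomy estimate against the area comparison that both branches of the defining minimum for $H_0$, as well as the implicit equation for $x_0$, must emerge simultaneously.
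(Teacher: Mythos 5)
A preliminary remark: this paper does not actually prove Theorem \ref{result1}; it is quoted from \cite{B1}. Still, the intended mechanism is visible here in the constant $C_0$ of \eqref{C_0} (the foliated systole bound $sys(\F)\geq C_0$), in the stated count $\{\text{number of Reeb components}\}\leq \frac{4H_0V_0}{\sqrt{3}C_0^2}$, and in the supporting-sphere comparison of subsection \ref{sec:1} used again in the proof of Part a) of Theorem \ref{Main}. Your opening moves match that machinery: non-tautness yields a generalized Reeb component $A$ bounded by tori, and the comparison of a leaf with a touching model sphere (mean curvature $\sqrt{K_0}\cot(r\sqrt{K_0})$, whence the $\arccot(H_0/\sqrt{K_0})$ cutoff, and $1/r$ in the flat case) is exactly where the two branches of $C_0$ come from --- although the correct way to use it is as a lower bound on the length of essential loops in the boundary tori, not as a bound on the radius up to which the normal exponential map of a leaf is an immersion.

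The genuine gap is in your global packing step. Extracting $N$ pairwise disjoint disks $D_1,\dots,D_N\subset L$, each of definite intrinsic radius and hence definite area, produces no contradiction with $Vol(M^3)\leq V_0$: area is two-dimensional, and infinitely many disjoint surfaces of uniformly bounded-below area can accumulate inside a region of finite volume (this is precisely what the planar leaves of a Reeb component do as they spiral onto the boundary torus). Your fix --- surrounding each $D_k$ by a tube whose volume is ``comparable to $\mathrm{Area}(D_k)$ with proportionality controlled by $H_0$'' --- cannot work, because the mean-curvature bound does not control the normal separation between consecutive sheets of $L$; that separation tends to $0$ under the holonomy, so the tubes must be taken arbitrarily thin and their total volume stays bounded as $N\to\infty$. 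The step that actually converts area into volume is the divergence theorem applied to the unit normal field $N$ of $\F$ over the generalized Reeb component: since $N|_{\partial A}$ points uniformly inward (or outward), $\mathrm{Area}(\partial A)=\bigl|\int_A \mathrm{div}(N)\,dV\bigr|\leq 2H_0\,Vol(A)\leq 2H_0V_0$. Pairing this with Loewner's torus inequality $\mathrm{Area}(T^2)\geq \frac{\sqrt{3}}{2}\,sys(T^2)^2$ applied to a boundary torus, and with the systole bound $sys\geq C_0=2\min\{i_0,\frac{1}{\sqrt{K_0}}\arccot\frac{H_0}{\sqrt{K_0}}\}$ coming from the supporting-sphere comparison, yields $\frac{\sqrt{3}}{2}C_0^2\leq 2H_0V_0$; unwinding the two branches of the minimum is what produces exactly the thresholds $\frac{2\sqrt{3}i_0^2}{V_0}$, $\sqrt[3]{2\sqrt{3}/V_0}$ and the $\arccot$-equation for $x_0$. (This also explains the constant $2\sqrt{3}$, which you misattribute to the area of a flat disk of radius $i_0$ --- that would be $\pi i_0^2$; the $\sqrt{3}$ is Loewner's constant for the torus.) Without the flux identity and the Loewner step, the argument does not close.
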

	
	If we take  the constant    $H_0$  to be    an arbitrary positive real  value bounding  from above  the modulus of the mean curvature $H$ of the leaves:     $ | H | <H_0 $,    then the foliation $\F$ may already contains (generalized) Reeb components.  Assuming that the constant $K_0$ takes also negative values, the number of Reeb components is estimated as follows (see \cite{B2}, \cite{B11}):
	$$
	 \text {\{the number of  Reeb components of $\F$\}} \leq    \frac{4 H_0V_0} {\sqrt{3}C^2_0},  
	$$
	 where
		\begin{equation}\label{C_0}			
		C_0=  \left \{\begin{array}{ll}
			2 \min \{{i_0}, \frac{1}{ \sqrt{K_0}} \arccot\frac{H_0}{\sqrt{K_0}}\}  , &\text {if } K_0>0 \\
			2 \min \{i_0, \frac{1}{H_0}\}, &\text {if } K_0=0 \\
			2 \min \{{i_0}, \frac{1}{ \sqrt{-K_0}} \arccoth\frac{H_0}{\sqrt{-K_0}}\}  , &\text {if } K_0<0 \ \& \  H_0 >\sqrt{-K_0} \\
			2  {i_0}, &\text {if } K_0<0 \ \& \  H_0 \leq \sqrt{-K_0}

		\end{array}  \right.
	\end{equation}

The constant $C_0$ has the following geometrical sense:
	$$ sys(\F)\geq C_0,$$
	where $sys (\F)$ (foliated systola) is the length  of the shortest   {\it integral loop} (i.e. a loop belonging to  some leaf of the foliation $\F$) among all  leafwise non-contractible  ({\it essential}) integral loops.  Such a loop  exists and is  geodesic in the  leaf (see (\cite{B11}).   
	
\begin{remark}
It follows from both Reeb stability theorem  (see \cite{R}) and Rosenberg's   theorem  (see \cite{Ros}) that  excepting for the two cases below, the foliation $\F$ contains a not simply connected leaf. 
	
	\begin{itemize}
		\item  $\F$ is a fibration  of  $S^2\times S^1$ by spheres $S^2\times *$.  Clearly, in this case   $M^3$ is not irreducible;
		\item  $\F $ is a foliation by planes. In this case $M^3\cong T^3$, in particular, $M^3$ is not atoroidal.  
		\end{itemize}
	
	  \end{remark}

		In our recent work \cite{B2}, we gave an upper bound on the $L^2$ - norm of the real Euler class $e(T\F)$ of an arbitrary transversally oriented foliation $\F$ of codimension one, defined on a three-dimensional closed irreducible orientable Riemannian 3-manifold $M^3$.  We proved:

	\begin{theorem}\label{main}
		Let $ V_0> 0, i_0> 0, H_0>0, k_0 \leq K_0$, where $k_0\leq0$,  be fixed constants.  Suppose   $ (M^3,\F) $ is a closed oriented irreducible three-dimensional Riemannian manifold equipped by  a two-dimensional transversely oriented foliation $\F $, whose leaves  have the modulus of mean curvature $H$ bounded above by the constant $H_0$,  and $M^3$  satisfies the  following conditions:
		\begin{enumerate}
			\item the volume $Vol(M^3)\leq V_0$; 
			\item the sectional curvature $K$  of  $M$  satisfies the  inequality $k_0\leq K  \leq K_0$;
			\item  $$\left\{\begin{array}{lr} \min \{inj(M^3),\frac{\pi}{2\sqrt{K_0}}\}, &\text {if } K_0>0\\
				inj(M^3)&\text {if } K_0\leq0 	\end{array}  \right\}\geq i_0,$$ where $ inj (M^3) $ is the injectivity   radius of  of  $ M^3 $.
		\end{enumerate}
		Then  there exist constants $C_0 (i_0, H_0, K_0)$ and $\Lambda(k_0,V_0,i_0)$  such that 
	
		\begin{equation}\label{euler1}
		||e(T\F)||_{L^{2}}  \leq -\frac{3}{\pi}k_0\sqrt{V_0}  +   \frac{32 H^2_0V_0^{\frac{3}{2}}} {3C^3_0}{\Lambda},
	\end{equation}

	\end{theorem}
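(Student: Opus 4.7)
My plan is to bound $\|e(T\F)\|_{L^2}$ by producing an explicit Chern--Weil representative of the Euler class and controlling its pointwise norm. Since the $L^2$-norm of a cohomology class is attained by the harmonic representative and hence minimized over all closed 2-form representatives, for any $\omega$ with $[\omega]=e(T\F)$ one has $\|e(T\F)\|_{L^2}\le\|\omega\|_{L^2}$. I would take $\omega=\frac{1}{2\pi}\Omega^{\F}$, where $\Omega^{\F}$ is the curvature 2-form of the metric-compatible connection on the oriented rank-2 bundle $T\F$ obtained by orthogonal projection of the ambient Levi-Civita connection of $(M^3,g)$.

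In an adapted orthonormal frame $e_1,e_2\in T\F$, $e_3\perp T\F$, the Gauss equation gives $\Omega^{\F}(e_1,e_2)=K_{\mathrm{sec}}(T\F)+\det W$, where $W$ is the Weingarten operator of the leaves, while the Codazzi equation controls the mixed components $\Omega^{\F}(e_i,e_3)$. Integrating the ambient-sectional-curvature piece against the Riemannian volume form using the lower bound $k_0\le K$ (together with the injectivity-radius bound $i_0$, which rules out curvature-driven collapse) yields a contribution to $\|\Omega^{\F}\|_{L^2}^2$ of order $k_0^2 V_0$; after taking the square root and absorbing the $(2\pi)^{-1}$ normalization, this produces the summand $\frac{3}{\pi}|k_0|\sqrt{V_0}$ in the estimate.

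The core of the proof is the extrinsic contribution from $\det W$ and the mixed components. Because $|H|\le H_0$ does not control the individual principal curvatures, no pointwise bound on $|W|$ is available, and the estimate must be of integral type. I would decompose $M^3$ into its taut part and the finite union of (generalized) Reeb components, whose cardinality is already at most $\frac{4H_0V_0}{\sqrt{3}C_0^2}$ by the estimate recalled after Theorem \ref{result1}. On the taut part the foliated systole bound $\mathrm{sys}(\F)\ge C_0$ from \cite{B11} furnishes an embedded leafwise geodesic disk of radius $\ge C_0/2$ at every point; on each such disk, the leafwise Gauss--Bonnet theorem combined with the Rummler identity for the leafwise area form converts $\int|W|^2$ into boundary contributions controlled by $H_0$, $C_0$, and the ambient curvature. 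A Vitali-type cover of $M^3$ by transverse flow-box thickenings of such disks then upgrades these local estimates to a global $L^2$ bound; the multiplicity of the cover and the Jacobian of the normal exponential map off each disk are controlled by Heintze--Karcher comparison, introducing the constant $\Lambda=\Lambda(k_0,V_0,i_0)$. A separate direct estimate on each Reeb component, exploiting that its boundary is a finite union of tori foliated with controlled holonomy, sums over the bounded number of components to complete the extrinsic bound.

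The main obstacle I anticipate is the tracking of sharp constants through both the Vitali cover and the Heintze--Karcher comparison in order to recover the precise coefficient $\frac{32}{3C_0^3}$ in the extrinsic term, as opposed to merely an estimate of the correct form. Once the ambient-curvature and extrinsic contributions are both in hand, the triangle inequality for $\|\cdot\|_{L^2}$ combines them to produce \eqref{euler1}.
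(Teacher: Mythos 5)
Your strategy---representing $e(T\F)$ by the Chern--Weil form $\frac{1}{2\pi}\Omega^{\F}$ of the orthogonally projected Levi-Civita connection and estimating its $L^2$-norm via Gauss--Codazzi---is not the route behind this theorem. The result is imported from \cite{B2} and not reproved here, but its mechanism is visible from inequality \eqref{euler} and from how the constants assemble: one evaluates $e(T\F)$ on the level surfaces $\Sigma_\theta=u^{-1}(\theta)$ of a harmonic map $u:M^3\to S^1$, runs Thurston's Poincar\'e--Hopf count of tangencies of $\Sigma_\theta$ with $\F$, controls the Euler-characteristic term by Stern's estimate $\int_{S^1}-\chi(\Sigma_\theta)\,d\theta\le\frac{1}{4\pi}\|\alpha\|_{L^2}\|R^-\|_{L^2}$ together with $\|R^-\|_{L^2}\le -6k_0\sqrt{V_0}$, treats the failure of tautness by a separate count of Reeb-component contributions (note $\frac{32H_0^2V_0^2}{3C_0^3}=\frac{4H_0V_0}{\sqrt{3}C_0^2}\cdot\frac{8H_0V_0}{\sqrt{3}C_0}$, i.e.\ the Reeb-component bound times a per-component defect), and finally converts $\|\alpha\|_{L^\infty}$ to $\|\alpha\|_{L^2}$ via \eqref{Pet3}, which is where $\Lambda$ enters. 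This duality-based argument never needs any control of the second fundamental form beyond its trace.

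That last point is exactly where your approach has a gap I do not think can be closed. The hypothesis $|H|\le H_0$ bounds only $\operatorname{tr}W$; the Gauss-equation term $\det W$ and, worse, the mixed components $\Omega^{\F}(e_i,e_3)$, which by Codazzi involve first covariant derivatives of $W$, are not controlled by $H_0,k_0,K_0,V_0,i_0$ in any pointwise or integral sense: a foliation can have leaves with $\kappa_1=-\kappa_2=t\to\infty$, oscillating at arbitrarily small transverse scales, while $H\equiv0$. Your proposed remedy does not repair this. Leafwise Gauss--Bonnet on a disk controls $\int(K_{\mathrm{sec}}+\det W)$ in terms of the boundary geodesic curvature (itself uncontrolled), and in no case bounds $\int|W|^2$ from above, since $\det W$ enters $|W|^2=(\operatorname{tr}W)^2-2\det W$ with the wrong sign; the Rummler identity only re-expresses the (already bounded) mean curvature as the exterior derivative of the leafwise area form and says nothing about $\nabla W$; and Heintze--Karcher/Vitali arguments control volumes and multiplicities, not curvature integrands. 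So $\|\Omega^{\F}\|_{L^2}$ is simply not bounded by the given data. A secondary symptom that the theorem is not of Chern--Weil origin: a pointwise ambient-curvature bound would contribute at most $\frac{1}{2\pi}|k_0|\sqrt{V_0}$ to the first term, whereas the actual constant $\frac{3}{\pi}|k_0|\sqrt{V_0}=\frac{6}{2\pi}|k_0|\sqrt{V_0}$ carries the factor $6$ characteristic of the scalar-curvature bound $R\ge 6k_0$ entering through Stern's formula.
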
	
	
	\begin{remark}
			The constant   $C_0 (i_0, H_0, K_0)$ is defined in \eqref{C_0}, 		and  $\Lambda(k_0,V_0,i_0)$ is taken from the inequality  \eqref{Pet3}.
		\end{remark}

 In this paper  we present the following result.
	
	\begin{theorem}[Main theorem]\label{Main}
	Let  $ (M^3,\F) $ be a closed oriented  three-dimensional Riemannian manifold equipped by  a two-dimensional transversely oriented foliation $\F $, whose leaves  have the modulus of mean curvature $H$ bounded above by the constant $H_0\geq 0$,  and $M^3$  satisfies the  following conditions:
	
	\begin{enumerate}
		\item  $Vol(M^3)\leq V_0$;
		\item $k_0\leq K \leq K_0$;
		\item $inj(M^3)\geq i_0.$ 
	\end{enumerate}
	for some fixed constants $ V_0>0, i_0>0, H_0>0, k_0< K_0<0$ bounding  the volume $Vol(M^3)$, the sectional curvature $K$ of $M^3$ and the injectivity radius $inj(M^3)$.

 Then
		\begin{enumerate}
			\item [a)]  If $H_0\leq \sqrt {-K_0}$,  then   $\F$ is tout and $||e(T\F)||^*_{Th} \leq 1$.
			
			\item [b)] If $H_0> \sqrt {-K_0}$, then the dual Thurson norm  is estimated from above as follows:
			
			\begin{equation}\label{euler3}
				||e(T\F)||^*_{Th} \leq 6\frac{k_0}{K_0}\Lambda   -   \frac{64\pi H^2_0V_0} {3K_0C^3_0}\Lambda^2,
			\end{equation}
			where $\Lambda$  and $C_0$ are as in Theorem \ref {main}.
			\item [c)] If  $M^3$  is a hyperbolic form ($k_0=K_0\equiv -1$), then 
			$$
			||e(T\F)||^*_{Th}  \leq 30\frac{\sqrt{V_0}}{\sqrt{i_0}}  +  \frac{1600 \pi H^2_0V_0^2}{3C^3_0 \ {i_0}}.
			$$
		
		\end{enumerate}

	\end{theorem}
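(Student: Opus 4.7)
The three parts split naturally: (a) is a pure tautness argument feeding into Thurston's inequality \eqref{Chi1}, while (b) and (c) are obtained by upgrading the $L^{2}$ estimate of Theorem~\ref{main} to a dual Thurston-norm estimate via harmonic representatives and minimal surfaces. For part (a), the Gauss equation applied to any leaf $L$ of $\F$ gives $K_{L}=K_{M}(T_{p}L)+k_{1}k_{2}\leq K_{0}+H^{2}\leq K_{0}+H_{0}^{2}$, which is nonpositive under the hypothesis $H_{0}\leq\sqrt{-K_{0}}$. If $\F$ had a generalized Reeb component, its boundary would contain a torus leaf $T$, and Gauss--Bonnet on $T$ would force $K_{L}\equiv 0$ together with the rigid totally-umbilic configuration $|H|=\sqrt{-K_{0}}$ and $K_{M}(T_{p}T)=K_{0}$ pointwise, which the atoroidal geometry of a strictly negatively curved $M^{3}$ excludes. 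Hence $\F$ is taut and \eqref{Chi1} yields $||e(T\F)||^{*}_{Th}\leq 1$.

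For part (b), represent $e(T\F)\in H^{2}(M^{3};\R)$ by its harmonic $2$-form $\omega$, so that $||\omega||_{L^{2}}=||e(T\F)||_{L^{2}}$. The Bochner--Weitzenb\"ock formula applied to the dual $1$-form $*\omega$, together with $\mathrm{Ric}\geq 2k_{0}$, gives $\Delta|\omega|^{2}\geq 4k_{0}|\omega|^{2}$. Moser iteration based on the Sobolev-type estimate \eqref{Pet3} then produces a mean-value inequality
$$||\omega||_{L^{\infty}}\leq\frac{\Lambda(k_{0},V_{0},i_{0})}{\sqrt{V_{0}}}\,||\omega||_{L^{2}},$$
with $\Lambda$ the constant from Theorem~\ref{main}. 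On the homological side, for any class $\alpha\in H_{2}(M^{3};\Z)$ choose a Thurston-norm-minimizing embedded representative and isotope each component to a least-area minimal surface $S$ in its isotopy class (this exists in an atoroidal irreducible negatively curved $3$-manifold by standard existence results). Atoroidality and irreducibility force every component of $S$ to have negative Euler characteristic, so $-\chi(S)=||\alpha||_{Th}$; minimality gives $k_{1}=-k_{2}$, hence $k_{1}k_{2}\leq 0$, so Gauss--Bonnet combined with the Gauss equation yields $\mathrm{Area}(S)\leq -2\pi\,||\alpha||_{Th}/K_{0}$. Combining these bounds with $|e(T\F)[\alpha]|=\left|\int_{S}\omega\right|\leq||\omega||_{L^{\infty}}\,\mathrm{Area}(S)$ and taking the supremum over $\alpha$ produces
$$||e(T\F)||^{*}_{Th}\leq -\frac{2\pi\Lambda}{K_{0}\sqrt{V_{0}}}\,||e(T\F)||_{L^{2}},$$
into which the estimate of Theorem~\ref{main} substitutes directly to give the two-term bound of (b).

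Part (c) is the specialization to a hyperbolic form $k_{0}=K_{0}=-1$, where the Moser-iteration constant can be computed explicitly from the standard hyperbolic Sobolev inequality, giving $\Lambda\leq 5\sqrt{V_{0}/i_{0}}$; substituting back into (b) reproduces the stated bound. The main obstacle I expect is step two of (b): obtaining the mean-value inequality with the sharp constant $\Lambda/\sqrt{V_{0}}$ requires carrying out Moser iteration with explicit control on the Sobolev constants in terms of $k_{0},V_{0},i_{0}$, rather than merely up to a qualitative multiplicative factor. A secondary subtle point is the equality case $H_{0}=\sqrt{-K_{0}}$ in part (a), which requires ruling out closed totally umbilic tori with the prescribed mean curvature in a compact negatively curved $3$-manifold.
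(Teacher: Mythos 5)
Your parts b) and c) are essentially the paper's own argument in dual packaging: the paper bounds the harmonic $1$-form $\alpha$ dual to a Thurston-norm-minimizing incompressible least-area surface $\Sigma$ (via \cite{FHS}), uses the Gauss equation and Gauss--Bonnet to get $Area(\Sigma)\leq 2\pi\chi(\Sigma)/K_0$, and combines \eqref{Pet3} with Theorem~\ref{main}; you do the same computation on the $2$-form side, and by Hodge duality $\|\omega\|_{L^\infty}=\|{*\omega}\|_{L^\infty}$, $\|\omega\|_{L^2}=\|{*\omega}\|_{L^2}$, so the ``Moser iteration'' you flag as the main obstacle is a non-issue --- \eqref{Pet3} applies verbatim to $*\omega\in\mathcal H^1(M^3)$ with the same constant $\Lambda$. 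Your derivation of c) by substituting $\Lambda\leq 5\sqrt{V_0/i_0}$ into b) does reproduce the stated constants; just note that the value $5$ is the Brock--Dunfield inequality \eqref{Dunf2}, not a ``standard hyperbolic Sobolev inequality,'' and the paper instead runs the argument through \eqref{euler} and both halves of \eqref{Dunf1} --- the two routes happen to coincide numerically.

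The genuine gap is in part a), precisely at the boundary case $H_0=\sqrt{-K_0}$, which the theorem includes. Your Gauss-equation argument $K_T\leq K_0+H_0^2\leq 0$ on a boundary torus leaf $T$ of a generalized Reeb component kills the strict case, but at equality it only yields the rigid configuration ($T$ totally umbilic, $|H|\equiv\sqrt{-K_0}$, $K_M(T_pT)\equiv K_0$, $K_T\equiv 0$), and your assertion that ``the atoroidal geometry'' excludes this is not a proof: atoroidality excludes \emph{incompressible} tori, whereas the torus bounding a (generalized) Reeb component is compressible (this is exactly the content of Theorem~\ref{Nov}), so no topological hypothesis rules it out --- one needs a geometric argument. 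The paper closes this case by lifting a Reeb component to the universal cover (Cartan--Hadamard) and producing a \emph{supporting sphere} $S(r)$ touching the boundary torus or a leaf from outside; the comparison inequality \eqref{ineq1} then forces the mean curvature at the touching point to be at least $\sqrt{-K_0}\coth(r\sqrt{-K_0})>\sqrt{-K_0}$, a \emph{strict} inequality that contradicts $|H|\leq H_0$ even when $H_0=\sqrt{-K_0}$ (with a separate treatment of the two possible topologies $D^2\times S^1$ and $D^2\times\R$ of the lift). Some such quantitative barrier argument, or a rigidity/maximum-principle argument excluding a closed totally umbilic torus with $|H|=\sqrt{-K_0}$ in a closed manifold with $K\leq K_0<0$, is needed to complete your proof of a).
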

	\begin{corollary}\label{cormain}
	If $\F$ contains a compact leaf $\mathcal K$, then 
		$$
		\frac{|\chi(\mathcal K)|}{|| [\mathcal K] ||_{Th}} \leq 30\frac{\sqrt{V_0}}{\sqrt{i_0}}  +  \frac{1600 \pi H^2_0V_0^2}{3C^3_0 \ {i_0}}.
		$$
		
		\end{corollary}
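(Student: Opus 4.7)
\medskip

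\noindent\textbf{Proof proposal for Corollary \ref{cormain}.}

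The plan is to read the inequality as a direct specialization of Theorem \ref{Main}(c). By definition of the dual Thurston norm,
\[
||e(T\F)||^*_{Th} \;=\; \sup_{\substack{\sigma\in H_2(M^3;\Z)\\ \sigma\neq 0}} \frac{|\langle e(T\F),\sigma\rangle|}{||\sigma||_{Th}},
\]
so as soon as we know that for the specific class $\sigma=[\mathcal K]$ the numerator equals $|\chi(\mathcal K)|$, the estimate of Theorem \ref{Main}(c) applied to the hyperbolic form will give precisely the claimed upper bound. (Should $[\mathcal K]=0$, we would have $\mathcal K$ bounding in $M^3$; by tautness considerations together with atoroidality / irreducibility in the hyperbolic setting one checks that the only compact leaves allowed with $[\mathcal K]=0$ are tori, for which $\chi=0$ and the inequality is vacuous, so we may assume $[\mathcal K]\neq 0$.)

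The only substantive step is therefore the identity
\[
\langle e(T\F),[\mathcal K]\rangle \;=\; \chi(\mathcal K).
\]
I would establish this by the standard observation that at every point of a compact leaf $\mathcal K$ the tangent distribution of $\F$ coincides with $T\mathcal K$; hence, as an oriented rank-two real vector bundle, the restriction $T\F\big|_{\mathcal K}$ is isomorphic to the tangent bundle $T\mathcal K$. By naturality of the Euler class under the inclusion $\iota:\mathcal K\hookrightarrow M^3$,
\[
\langle e(T\F),[\mathcal K]\rangle \;=\; \langle \iota^*e(T\F),[\mathcal K]\rangle \;=\; \langle e(T\mathcal K),[\mathcal K]\rangle,
\]
and the Gauss--Bonnet / Poincaré--Hopf theorem identifies the right-hand side with $\chi(\mathcal K)$.

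Combining, for the compact leaf $\mathcal K$ we get
\[
\frac{|\chi(\mathcal K)|}{||[\mathcal K]||_{Th}} \;\leq\; ||e(T\F)||^*_{Th},
\]
and the corollary follows immediately from the hyperbolic estimate of Theorem \ref{Main}(c). I do not expect any genuine obstacle: the whole argument reduces to the naturality of the Euler class plus the tautological identification $T\F|_{\mathcal K}=T\mathcal K$; the analytic content is entirely absorbed in the main theorem.
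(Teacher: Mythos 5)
Your argument is correct and is precisely what the paper intends (the corollary is stated without proof there): the identification $T\F|_{\mathcal K}\cong T\mathcal K$ plus naturality gives $\langle e(T\F),[\mathcal K]\rangle=\chi(\mathcal K)$, and the bound is then immediate from Part c) of Theorem \ref{Main}. Your parenthetical handling of the degenerate case $[\mathcal K]=0$ (forcing $\mathcal K$ to bound a generalized Reeb component and hence, by Goodman's result cited in the paper, to be a torus) is a worthwhile precision the paper omits.
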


	\section {Background material}

		\subsection {Comparison inequalities for  Mean curvature}
			\label{sec:1}
		
		Recall the following  comparison theorem for the normal curvatures.
		
		\begin{theorem}\cite [ 22.3.2.]{BZ}\label {BZ}
			Let $p\in M$ and $\ \beta:[0,r]\to M$ be a radial  geodesic  of the ball $B(p,r)$ of radius $r$ centered at the point $p$ of the Riemannian manifold $M$. Let   $\beta(r)$ be a point  not conjugate with $p$ along $\beta$. Let the radius $r$ be such that there are no conjugate points in the space of constant curvature $K_0$ within the radius of length $r$. Then if at each point $\beta (t)$ the sectional curvatures $K$ of the manifold $M$ do not exceed $K_0$, then the normal curvature $k^S_n$ of the sphere $S(p,r)$ at the point $\beta(r)$ with respect to the normal $-\beta'$ is not less than the normal curvature $k_n^0$ of the sphere of radius $r$ in the space of constant curvature $K_0$.
		\end{theorem}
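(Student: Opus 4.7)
The plan is to express the normal curvatures of $S(p,r)$ through Jacobi fields along the radial geodesic $\beta$, and then to compare the resulting shape operator with its model-space counterpart via a Riccati-equation argument. Concretely, fix a unit tangent vector $v$ to $S(p,r)$ at $\beta(r)$ with $v\perp\beta'(r)$. Since $\beta(r)$ is not conjugate to $p$ along $\beta$, there is a unique Jacobi field $J$ along $\beta$ with $J(0)=0$, $J(r)=v$ and $J\perp\beta'$. Because $J$ arises from a variation by geodesics through $p$, one has $\nabla_{J(t)}\beta'=J'(t)$, so the shape operator $A(r)$ of $S(p,r)$ with respect to the inward normal $-\beta'(r)$ satisfies $A(r)v=J'(r)$, and the normal curvature in direction $v$ equals $k_n^S=\langle J'(r),v\rangle$. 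In the model space $M_{K_0}$ of constant sectional curvature $K_0$ the analogous Jacobi field is $s_{K_0}(t)E(t)$ with $E$ parallel unit, where $s_{K_0}$ solves $s''+K_0 s=0$, $s(0)=0$, $s'(0)=1$; the hypothesis that no conjugate points of the model occur within radius $r$ says precisely that $s_{K_0}>0$ on $(0,r]$, and $k_n^0=s_{K_0}'(r)/s_{K_0}(r)$.

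The comparison step uses the Riccati equation. As $t$ varies over $(0,r]$, the shape operator $A(t)$ of the concentric geodesic sphere $S(p,t)$ with respect to $-\beta'(t)$ satisfies
\[A'(t)+A(t)^2+R_{\beta'(t)}=0,\qquad R_{\beta'(t)}X:=R(X,\beta'(t))\beta'(t),\]
while the scalar $a(t):=s_{K_0}'(t)/s_{K_0}(t)$ solves $a'+a^2+K_0=0$. Parallel-transporting $v$ backwards along $\beta$ produces a unit section $X(t)\perp\beta'(t)$, and, using $\langle R_{\beta'(t)}X(t),X(t)\rangle\le K_0$ from the sectional-curvature bound, the function $\varphi(t):=\langle A(t)X(t),X(t)\rangle-a(t)$ satisfies a differential inequality of the form $\varphi'+\bigl(\langle A(t)X(t),X(t)\rangle+a(t)\bigr)\varphi\ge 0$. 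Because both $\langle A(t)X,X\rangle$ and $a(t)$ share the asymptotics $1/t$ as $t\to 0^+$, the initial value $\varphi(\varepsilon)$ tends to $0$ with $\varepsilon$, and the differential inequality then forces $\varphi\ge 0$ on $(0,r]$. Setting $t=r$ yields $k_n^S\ge k_n^0$.

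The main obstacle will be the singularity at $t=0$, where both $A$ and $a$ blow up like $1/t$, together with the mismatch that $A$ is a symmetric endomorphism of the orthogonal complement of $\beta'$ whereas $a$ is a scalar. I would handle this by starting the comparison at $t=\varepsilon>0$, using that $|A(\varepsilon)-a(\varepsilon)\,\mathrm{Id}|=O(\varepsilon)$, invoking the monotone dependence of Riccati flows on initial data and on the zero-order curvature term, and then letting $\varepsilon\to 0^+$. Conceptually the argument is a shape-operator incarnation of Rauch's first comparison theorem: under $K\le K_0$, Jacobi fields vanishing at $p$ grow no slower than in the model, and this translates directly into the inequality between normal curvatures of geodesic spheres.
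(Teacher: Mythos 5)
First, a point of reference: the paper offers no proof of this statement at all --- it is imported verbatim from Burago--Zalgaller \cite[22.3.2]{BZ} as background --- so there is no in-paper argument to compare yours against, and your proposal must stand on its own. Your overall strategy (express the shape operator of $S(p,t)$ through Jacobi fields vanishing at $p$, compare with the model via the Riccati equation, regularize at $t=0$) is the standard and correct route, and the identifications $k_n^S=\langle J'(r),v\rangle$, $k_n^0=s_{K_0}'(r)/s_{K_0}(r)$, and the $O(\varepsilon)$ matching of initial data are all right.

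However, the central differential inequality you assert is false as written, and this is a genuine gap. For a \emph{parallel} unit field $X(t)$, using $A'=-A^2-R_{\beta'}$ and $a'=-a^2-K_0$ one computes
$$\varphi'+\bigl(\langle AX,X\rangle+a\bigr)\varphi=\bigl(K_0-\langle R_{\beta'}X,X\rangle\bigr)-\bigl(|AX|^2-\langle AX,X\rangle^2\bigr).$$
The first bracket is $\ge 0$ by the curvature hypothesis, but the second bracket is also $\ge 0$ by Cauchy--Schwarz (with $A$ symmetric, $\langle A^2X,X\rangle=|AX|^2\ge\langle AX,X\rangle^2$) and it is \emph{subtracted}; it vanishes only when $X(t)$ is an eigenvector of $A(t)$, which a parallel field has no reason to be. So the scalar projection onto a fixed parallel direction does not satisfy the Riccati inequality your argument needs. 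The standard repairs are: (i) run the comparison on the smallest eigenvalue $\lambda(t)$ of $A(t)$ --- at a point of differentiability with unit eigenvector $X$ one has $\langle A^2X,X\rangle=\lambda^2$ exactly, hence $\lambda'\ge-\lambda^2-K_0$, and the scalar Riccati comparison gives $\lambda(r)\ge a(r)$, which bounds \emph{every} normal curvature at $\beta(r)$ from below; or (ii) compare the full symmetric endomorphisms: $\Phi:=A-a\,\mathrm{Id}$ satisfies $\Phi'+\Phi\,(A+a\,\mathrm{Id})=K_0\,\mathrm{Id}-R_{\beta'}\ge 0$, and the matrix Riccati comparison of Eschenburg--Heintze yields $\Phi\ge 0$. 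Your closing paragraph gestures at (ii) via ``monotone dependence of Riccati flows,'' so the idea is recoverable, but the inequality actually asserted in the body of the proof is the one that fails. A secondary point you should make explicit: the hypothesis only excludes a conjugate point at $t=r$ itself, so you need to observe that $K\le K_0$ together with the absence of conjugate points in the model up to radius $r$ forces $A(t)$ to be defined on all of $(0,r]$ (by Rauch, or by non-blow-up of a Riccati solution bounded below by the model solution), before the comparison can be run on that whole interval.
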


		Let $M^3$ be a 3-Manifold satisfying the condition of  Theorem \ref{Main}.  Note that   all normal curvatures of the sphere $S(r)\subset M^3$ of radius $r$ are positive, provided that $r<i_0$ and the normal to the sphere $S(r)$ is directed inside the ball $B(r)$ which it bounds\footnote {The sphere $S(r)$ indeed bounds the ball, since by definition $r< inj (M^3)$.}. We will call such a normal  {\it inward}.

		\begin{definition}
			We  will call a hypersurface $S\subset M^3$ of the Riemannian manifold $M^3$    {\it supporting} to the subset $A\subset M^3$ at the point $p\in \partial A\cap S$ with respect to the normal $n_p \perp T_pS$, if $S$ cuts some spherical neighborhood $B_p$ of the point $p$ into two components, and $A\cap B_p$ is contained in that component to which the normal $n_p$ is directed.
			We will call the sphere $S(r)\subset M^3$ ($r< i_0$) the {\it supporting sphere} to the set $A\subset M^3$ at the point $q\in A\cap S(r)$ if it is the supporting sphere to $A $ at the point $q$ with respect to the inward normal.
		\end{definition}

		The following  lemma is obvious. 
		
		\begin{lemma}(\cite[Lemma 4]{B1})\label{l1}
			Assume that the sphere $S(r_0)$ ($r_0< i_0$) is the supporting sphere to the surface $F\subset M^3$ at the point $q $. Then $k^S_n(v) \leq k_n^F(v)\ \forall v\in T_qS(r_0)$, where $k^S_n(v)$ and $k^F_n(v)$ denote corresponding normal curvatures of $S(r_0)$ and $F$ at the point $q$ in the direction $v$.
		\end{lemma}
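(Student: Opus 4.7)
The plan is to reduce the claim to the elementary Hessian comparison: if two smooth functions agree together with their gradients at a point $0$ and satisfy $h_F\ge h_S$ in a neighbourhood of $0$, then $\operatorname{Hess}(h_F)(0)\ge\operatorname{Hess}(h_S)(0)$ as symmetric bilinear forms. Since a ``supporting sphere'' at $q$ forces $F$ to lie on one side of $S(r_0)$ near $q$, one first shows that $F$ cannot cross $S(r_0)$ transversely there, so the tangent planes agree: $T_qF=T_qS(r_0)$.

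The second step is to set up coordinates near $q$. Using the exponential map $\exp_q\colon T_qM^3\to M^3$ on a ball of radius less than $i_0$, and choosing an orthonormal basis of $T_qM^3$ whose last vector is the inward normal $n$ to $S(r_0)$, one can write both $S(r_0)$ and $F$ locally as graphs $z=h_S(x,y)$ and $z=h_F(x,y)$ over the common tangent plane. The defining property of supporting spheres, with the convention that the inward normal is the positive $z$-direction, gives $h_F(x,y)\ge h_S(x,y)$ near $0$, while the tangency from the first step gives $h_S(0)=h_F(0)=0$ and $\nabla h_S(0)=\nabla h_F(0)=0$.

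Consequently $h_F-h_S$ attains a local minimum at $0$ with value $0$, so $\operatorname{Hess}(h_F)(0)\ge\operatorname{Hess}(h_S)(0)$. For any $v\in T_qS(r_0)$ the normal curvature of a graph at the origin (with respect to the $+z$ normal) is $\operatorname{Hess}(h)(0)(v,v)/|v|^2$, up to metric corrections which, thanks to Gauss' lemma, are only of higher order and do not affect second-order data at $q$. Thus the Hessian inequality becomes exactly $k^S_n(v)\le k^F_n(v)$.

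\textbf{Main obstacle.} The authors themselves label the lemma ``obvious,'' so no serious difficulty is expected; the only care needed is bookkeeping of sign conventions so that ``inward normal'' really corresponds to ``$F$ above $S(r_0)$ as a graph,'' and verifying that the exponential-map parametrisation does not spoil the second fundamental forms at the base point.
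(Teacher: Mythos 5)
Your argument is correct and is exactly the standard reasoning behind this comparison, which the paper itself does not spell out (it declares the lemma obvious and cites \cite[Lemma 4]{B1}): one-sidedness forces $T_qF=T_qS(r_0)$, both surfaces become graphs over the common tangent plane in normal coordinates, and the second-order inequality at the touching point gives the normal curvature comparison. The only cosmetic point is that the irrelevance of the metric corrections at $q$ is better justified by the vanishing of the Christoffel symbols at the center of normal coordinates than by Gauss' lemma, but this does not affect the validity of the proof.
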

		
		As a consequence of   Lemma \ref{l1} and  Theorem \ref{BZ}  we obtain the following inequalities  at the touching point $q$ :
		\begin{equation}\label{ineq1}
			0< H_r^0\leq H_r(q) \leq H (q),
		\end{equation}
		where $H^0_r $ and $H_r$ are the mean curvatures of the spheres $S(r)$ bounding the ball of radius $r, \ r<i_0,$ in the space of constant curvature $K_0$ and the manifold $M^3$ respectively, and $H$ is the mean curvature of the surface $F$ .

		\subsection{Novikov's theorem and  vanishing cycle.} \label{2.3}

		Let $(M^3,\F)$ be a foliated closed  3- Manifold. An integral  loop   $\alpha:S^1\to M^3$ is a {\it vanishing cycle} if there exists a homotopy $A: S^1\times I \to M^3$ through integral loops $A_t:=A|_{S^1\times t}$ for $\F$ such that $A_0=\alpha$ and $A_t$ is inessential (i.e. is contractible in the supporting  leaf) for $0<t\leq 1$. 
		A vanishing cycle $\alpha $ is {\it non-trivial} if $\alpha$ is essential.
		
		The following well-known Novikov's theorem  is a topological criterion for the existence of a Reeb component in a foliation $\F$.

		\begin{theorem}  \cite {Nov}\label{Nov}. 
			\begin{enumerate}
				\item For a closed, orientable smooth 3-Manifold $M^3$ and a transversely orientable  $C^2$-smooth foliation $\F$ of codimension one on   $M^3$, the following are equivalent.
				\begin{enumerate}
					\item  The foliation $\F$  has a Reeb component.
					\item There is a leaf $\mathcal L$ of $\F$  that is not $\pi_1$-injective. That is, the inclusion $i: \mathcal L \to M^3$ induces a homomorphism $i_*:\pi_1({ \mathcal L})\to \pi_1(M^3)$  with nontrivial kernel. 
					\item Some leaf of $\F$  contains a non-trivial vanishing cycle.
				\end{enumerate}
				\item{The  support of the non-trivial vanishing cycle is a torus  bounding a Reeb component.}
			\end{enumerate}
		\end{theorem}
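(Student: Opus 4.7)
The plan is to prove the cyclic chain of implications (a) $\Rightarrow$ (b) $\Rightarrow$ (c) $\Rightarrow$ (a), and then obtain part (2) as a byproduct of the final implication. Each of the three steps uses a qualitatively different argument: the first is topological and essentially by inspection, the second is a transversality/singular foliation argument on a disk, and the third is the hard analytic heart of Novikov's theorem.

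For (a) $\Rightarrow$ (b): Let $R\subset M^3$ be a Reeb component. By the standard description, $R$ is a foliated solid torus $D^2\times S^1$ whose boundary leaf is a torus $T^2$ and whose interior leaves are planes accumulating on $T^2$. The meridian curve $m=\partial D^2\times \{*\}$ is essential in $T^2$, but bounds a meridional disk $D^2\times\{*\}\subset R\subset M^3$. Hence the boundary torus is a leaf of $\F$ whose inclusion into $M^3$ has nontrivial kernel on $\pi_1$.

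For (b) $\Rightarrow$ (c): Suppose $\mathcal L$ is a leaf containing a loop $\gamma$ that is essential in $\mathcal L$ but null-homotopic in $M^3$. Extend $\gamma$ to a continuous map $f\colon D^2\to M^3$. After a $C^1$-small perturbation keeping $f|_{\partial D^2}=\gamma$ fixed, make $f$ transverse to $\F$ off finitely many Morse-type tangencies. The pullback singular foliation $f^*\F$ on $D^2$ has all singularities of center or saddle type, with indices summing to $\chi(D^2)=1$ by the Poincar\'e--Hopf formula, so $f^*\F$ carries at least one center. A standard disk-foliation analysis produces a family of simple leaf-circles of $f^*\F$ shrinking to that center; pushing them forward by $f$ yields a homotopy $A\colon S^1\times I\to M^3$ through integral loops with $A_0=\gamma$ and $A_t$ null-homotopic in its supporting leaf for $t>0$, i.e.\ a vanishing cycle, non-trivial because $\gamma$ is essential.

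For (c) $\Rightarrow$ (a) and part (2): Given a non-trivial vanishing cycle $A$, for each $t>0$ the loop $A_t$ bounds a disk $D_t$ inside its leaf, and one can arrange these disks to depend continuously on $t$. The support $\alpha=A_0$ being essential prevents the family $\{D_t\}$ from converging to a disk in $\mathcal L$; instead, the only possibility compatible with the transverse structure is that the holonomy of $\F$ around $\alpha$ is one-sided, producing a closed transversal that loops back through a nested family of leaves spiralling onto a compact invariant region $A_R$. A minimality/limit argument on the family $D_t$ then shows that the carrier leaf of $\alpha$ is compact, its universal cover is a plane, and an Euler characteristic count forces it to be a torus; the transverse orientation shows $\partial A_R$ is this torus and that $A_R\cong D^2\times S^1$ is a Reeb component, yielding both (a) and the torus statement in (2).

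The main obstacle is the last step (c) $\Rightarrow$ (a): from soft homotopical data (a vanishing cycle) one must manufacture a concrete embedded torus leaf and a foliated solid-torus structure. The delicate point is controlling the family $\{D_t\}$ as $t\to 0$, ruling out infinitely long and thin degenerations, and showing the limit compact leaf is genuinely embedded rather than merely immersed. This is precisely Novikov's original technical achievement and requires the full strength of the transverse compactness / holonomy analysis; the earlier implications are comparatively standard once this is in hand.
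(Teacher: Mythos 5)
This is a background theorem that the paper does not prove: it is quoted verbatim with a citation to Novikov \cite{Nov}, so there is no in-paper argument to compare against. Your outline does follow the classical Novikov--Haefliger route (Reeb component $\Rightarrow$ meridian kills $\pi_1$-injectivity; general position of a null-homotopy disk and a Poincar\'e--Hopf count to extract a vanishing cycle; the compactness/holonomy analysis of the shrinking disks $D_t$ to reconstruct a Reeb component), and that architecture is correct.

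Two points deserve flagging. First, in (b) $\Rightarrow$ (c) your justification of non-triviality --- ``non-trivial because $\gamma$ is essential'' --- is too quick: the vanishing cycle produced by this construction is in general \emph{not} $\gamma$ but the boundary circle of a maximal ``center region'' of the induced singular foliation on $D^2$, possibly on a different leaf; its essentiality in its own leaf comes from the maximality of that region (if the boundary circle were inessential in its leaf one could enlarge the region), and one must first eliminate saddle self-connections and argue by induction on the number of singularities to ensure such a maximal region with circle boundary exists. The circles shrinking to a center are inessential in their leaves because they lie in plaques, which is what makes the interior of the region ``good''; the transition at its boundary is where the vanishing cycle lives. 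Second, the implication (c) $\Rightarrow$ (a) together with part (2) is, as you yourself say, the entire content of the theorem; your paragraph describes the desired conclusion (a compact torus leaf bounding a solid torus foliated as a Reeb component) rather than supplying the mechanism --- the continuous choice of the disks $D_t$ via local Reeb stability and simple connectivity, the argument that the transversal through a point of $A_0$ meets $\bigcup_t D_t$ in a half-open interval, and the limit analysis showing the closure of $\bigcup_t D_t$ is a solid torus with a compact boundary leaf. As a proof this step is therefore a genuine gap, though an honestly acknowledged one; as a roadmap to Novikov's proof the proposal is accurate.
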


		\subsection {Norms on cohomologies}
	
		\subsubsection{$L^2$ -norm on cohomologies and   harmonic forms}\label{harm}
		
		Let $M^3$ be a closed oriented Riemannian 3-Manifold. 
	Recall that 
	\begin{equation}\label{S^1}
		H^1(M^3;\Z)\cong [M^3,S^1] \ \text{(homotopy classes of mappings to the circle)}
	\end{equation} 
	and each cohomological class 
	\begin{equation}\label{a}
		a\in H^1(M^3;\Z)
	\end{equation} 
	 can be obtained as an  image of the generator $[S^1]^*\in H^1(S^1;\Z)\cong \Z$ under the homomorphism $f^*:H^1 (S^1;\Z)\to H^1(M^3;\Z)$ induced by  the  mapping $f: M^3\to S^1$  uniquely  defined up to homotopy. Recall that the group $H_2(M^3;\Z)\overset{PD}\cong H^1(M^3;\Z)$ does not contain a torsion and we can identify $H^1(M^3;\Z)$ with  the integer lattice $H^1(M^3;\Z)_{\R}\subset H^1(M^3;\R)$ and $H_2(M^3;\Z)$ with   the integer lattice $H_2(M^3;\Z)_{\R}\subset H_2(M^3;\R)$ respectively. Observe that  the Poincar\'e duality $H^1(M^3;\R)\overset{PD}\cong  H_2(M^3;\R)$ induces the Poincar\'e duality  of integer lattices  $H^1(M^3;\Z)_{\R}\overset{PD}\cong  H_2(M^3;\Z)_{\R}. $
	
	Let  us identify $S^1 $ with the unit  length circle $\R/\Z$ with the natural parameter $\theta$. If $f$ is  a smooth function, then the preimage $f^{-1}(\theta)$ of a regular value $\theta\in S^1$ is a smooth (not necessarily connected)  oriented    submanifold $M^2\subset M^3,$  representing the class $[M^2]\in H_2(M^3;\Z)_{\R}$ dual to   to the cohomology class   $a$ (see \eqref{a}).

Let us  use the isomorphism between  singular cohomologies with real coefficients and de Rham cohomologies.
	 Recall that each homotopy  class in $ [M^3,S^1]$ can be represented by the harmonic mapping (\cite{ES}). 	Let $$u:M^3\to S^1$$ be  a harmonic map representing  the nontrivial  class $$[u] \in [M^3,S^1]\cong H^1(M^3;\Z).$$  Observe that $$\alpha = u^* d\theta, \ \theta \in S^1$$ is a harmonic 1-form  on $M^3$ corresponding to the  integer lattice class $[u]\in H^1(M^3;\Z)_{\R}$.
	
	On the space of differential $k$ -forms $\Omega^k(M^3)$, $k\in \{0,1,2,3\}$,   one can introduce  $L^2$-norm:  
	\begin{equation}\label{hn} 
		||\alpha||_{L^2} =\sqrt{\int_{M^3}\alpha \wedge *\alpha} = \sqrt{\int_{M^3}|\alpha|^2},\end{equation}
	where $*$ denotes the Hodge star operator  and  $$|\alpha_p| = \sqrt{*(\alpha_p \wedge *\alpha_p)},  \ p\in M^3.$$
	Observe,  in 3-dimensional vector space $T_pM^3$ each  k-form  $\alpha_p$  is simple and   $|\alpha_p|$ coincides with the comass norm: 
	$$  |\alpha_p| = \max \alpha_p(e_1,\dots,e_k),$$
	where the maximum  is taken over all orthogonal  frames of vectors $(e_1,\dots,e_k)$ in $T_pM^3$. 
	
	We will also use the   $L^{\infty}$-norm on $\Omega^*(M^3)$, which is defined as follows:
	$$||\alpha||_{L^{\infty}}:=\max_{p\in M^3} |\alpha_p|.$$
	
	The norm \ref{hn} induces the $L^2$-norm on the de Rham cohomology of $M^3$ as follows. Let $a\in H^k(M^3;\R)$, then we set 
	
	$$ ||a||_{L^2}:= \inf_{\alpha} \{||\alpha||_{L^2}: \alpha \in \Omega^k(M^3) \  \text{is a smooth closed k-form representing }  a\}. $$
	
	From de Rham  - Hodge theory it follows  $||a||_{L^2} = ||\alpha||_{L^2} $, where $$\alpha \in \mathcal H^1(M^3)\subset \Omega^1(M^3)$$ is the unique harmonic form  $(d\alpha=\delta\alpha=0 )$ representing the class  $a\in H^k(M^3;\R)$. \footnote{Through  $\mathcal H^1(M^3)$ we denote the space of harmonic 1-forms on $M^3$.}

	Using Poincar\'e duality $H_i(M^3;\R)\overset {PD}\cong H^{3-i}(M^3;\R)$ we can introduce the $L^2$ - norm on $H_2(M^3;\R)$ setting $$||b||_{L^2} := ||PD(b)||_{L^2}, \ b\in H_i(M^3;\R). $$  
	On the other hand, the non-degenerate Kronecker pairing
	$$
	<,>:H^k(M^3;\R) \times H_k(M^3;\R) \to \R
	$$
	induced by integration of closed forms over cycles, allows us to define  the $L^2$- norm  $||\cdot||^*_{L^2}$  on $H_k(M^3;\R)\cong (H^k(M^3;\R))^*$  dual to the $L^2$ - norm  $|| \cdot||_{L^2}$ on $H^k(M^3;\R)$. 
	As was shown in \cite {BK}  
	$$
	PD: (H^i(M^3 ;\R), || \cdot||_{L^2}) \to (H_{3-i}(M^3; \R), || \cdot||^*_{L^2}) 
	$$
	is an isometry for $i=1,2$.

	Note,  that $$PD([\alpha\wedge \beta] )=  PD([\beta\wedge \alpha])=<[\alpha],PD([\beta])>= <[\beta],PD([\alpha])>,$$
	where $\alpha\in \Omega^1(M^3)$ and $\beta\in \Omega^2(M^3)$ are closed forms. Since the  set of  integer-directed rays   from   $0\in H^1(M^3;\R)$  are everywhere dense set in  $H^1(M^3;\R)$.    we have:
	
	\begin{equation}\label{*-norm}
		|| b ||_{L^2} = ||PD(b)||^*_{L^2} = \sup_{a\not=0}\frac {<a,PD(b)>}{||a||_{L^2}}= \sup_{[\Sigma]\not = 0}\frac {<b,[\Sigma]>}{||[\Sigma]||_{L^2}},
	\end{equation}   
	where  $b \in H^2(M^3,\mathbb R)$,  $a \in H^1(M^3,\Z)_{\R}$ and  $\Sigma$ is a  compact oriented surface embedded in $M^3$ such that  $PD(a)=[\Sigma]. $

	Let us recall the following    inequality   (see \cite[[7.1.13,7.1.17, 9.2.7,9.2.8]{Pet}).  If $\alpha$ is a harmonic 1-form on closed Riemannian manifold $M^n$,  then 

\begin{equation}\label{Pet}
||\alpha||_{L^{\infty}} \leq \Lambda_n(k,D) ||\alpha||_{2}.
\end{equation} 
Here $||\alpha||_{2} =\frac{||\alpha||_{L^{2}}}{\sqrt{Vol(M^n)}}$,   $D>0$ is the constant satisfying the inequality   $Diam(M^n)\leq D$  and  $k\leq 0$   is the constant satisfying the inequality $Ric(M^3)\geq (n-1)k$.

		C.B. Croke   in \cite{Croke}   gave an estimate for the diameter of a closed  Riemannian manifold,  which we  adapt to 3-dimensional case:
		$$
		Diam (M^3)\leq  \frac{27\pi Vol(M^3)}{8\ inj(M^3)^{2}}.
		$$  
		In particular, if $M^3$ satisfies the conditions of  Theorem \ref{Main} we can take  $$D=\frac{27}{8}\pi\frac{ V_0}{ i_0^{2}}.$$
		Moreover, we can put $k=k_0$ \footnote {$k_0<0$ by the condition of Theorem \ref{Main}. }  and thus,   $ \Lambda_3(k,D) = \Lambda(V_0, i_0,k_0)$,  and we can rewrite the inequality \eqref{Pet} in our case as follows:

				\begin{equation}\label{Pet3}
				||\alpha||_{L^{\infty}}\leq  \frac{\Lambda(V_0, i_0,k_0)}{\sqrt{Vol(M^3)}}||\alpha||_{L^2}, \ \alpha\in \mathcal H^1(M^3).
							\end{equation}

		 In the case where $M^3$ is hyperbolic,  F. Brock and Nathan M. Dunfield \cite{BD} proved   the following inequality  to be true:
		
	\begin{equation}\label{Dunf2}
			||\alpha||_{L^{\infty}}\leq  \frac{5}{\sqrt{inj(M^3)}}||\alpha||_{L^2}, \ \alpha\in \mathcal H^1(M^3).
	\end{equation}

		\subsubsection{Thurston norm}\label{3.2.}

		The Thurston norm on $H_2(M^3;\mathbb Z)$ is defined in \cite{Th} as follows: 
		$$
		|| a ||_{Th} = inf \{\chi_{-}(\Sigma) | \Sigma \text {\ is an embedded  surface representing}   
		\	a\in H_2(M^3;\mathbb Z ) \},
		$$
		where $\chi_{-} (\Sigma) = max\{ -\chi(\Sigma), 0\}$. Recall that $\chi(\Sigma)=2-2g$ denotes the Euler characteristic of a surface $\Sigma$ of genus $g$.  When $\Sigma$ is not connected, define $\chi_- (\Sigma) $ to be the sum $\chi_- (\Sigma_1 ) + \dots +\chi_- (\Sigma_k)$, where $\Sigma_i, \ i=1,\dots, k$ are the connected components of $\Sigma$.
		As Thurston showed, the Thurston norm can be extended in a unique way to the seminorm in $H_2(M^3,\mathbb R)$. 
		
		Suppose that $M^3$ is irreducible atoroidal oriented 3-Manifold, for example, $M^3$ is   a closed oriented  3-Manifold of negative curvature, then   $|| \cdot ||_{Th} $  is a norm.  In this case we can define the dual Thurston norm as follows:

		\begin{equation}
			||\alpha||^*_{Th} = \sup_{\Sigma} \frac{<\alpha, [\Sigma]> }{||[\Sigma]||_{Th}}, 
		\end{equation}
		where  $\alpha \in H^2(M^3,\mathbb R)$ and the supremum being taken over all compact oriented surfaces $\Sigma$ embedded in $M^3$ such that $0\not=[\Sigma]\in H_2(M^3;\Z)$.  
		
		Thurston proved that the convex hull of the Euler classes of taut  foliations on $M^3$ is the unit ball for the dual Thurston norm. In particular,  the dual Thurston norm $||e(T\F)||^*_{Th}\leq 1$ for the taut foliation $\mathcal F$. 
		
		Define the Thurston norm on $H^1(M^3;\R)$ by setting 
		$$||a||_{Th} := ||PD(a)||_{Th}, \   a\in H^1(M^3;\R).$$

			Let $u:M^3\to S^1$ be  a harmonic map representing  the nontrivial  class $[u] \in [M^3,S^1]\cong H^1(M^3 ;\Z)\overset{PD}\cong H_2(M^3;\Z)$.

			  In  \cite{St}  D. Stern estimates an average Euler characteristic of a surface dual to the harmonic mapping of $M^3$  into the circle, which made it possible to estimate Thurston's norm from above:

			\begin{equation} \label{St1}
				||\alpha||_{Th}\leq \int_{\theta\in S^1}-\chi(\Sigma_{\theta}) \leq \frac{1}{4\pi}||\alpha||_{L^2}||R^-||_{L^2},
			\end{equation}
			where $R^-:=\min\{0,R\}$ is a negative part of the scaler curvature $R$ and $\alpha = u^*d\theta$.
	
	\begin{remark}
	A similar estimate was obtained by P. Kronheimer and T.   Mrowka  in \cite{KM} for the dual Thurston norm on the spce of 2-dimensional cohomologies   $H^2(M^3;\R)$ of irreducible atoroidal 3-Manifold $M^3$: 
			\begin{equation} \label{St1}
			||e||^*_{Th}\geq  {4\pi}\frac {||e||_{L^2}}{||R||_{L^2}}, \  e\in H^2(M^3;\R).
		\end{equation}
		
Moreover,  	they proved, that 
	
		\begin{equation} \label{St1}
		||e||^*_{Th} = \sup_{g\in {\mathcal G}} {4\pi}\frac {||e||_{L^2}}{||R||_{L^2}},
	\end{equation}
	where $\mathcal G$ is a space of all Riemannian metrics on $M^3$.  

P. Kronheimer and T. Mrowka  also showed  that in the case when $e\in H^2(M^3;\R)$ is a monopole class  arising  in 3-dimensional Siberg-Witten theory,  then $$  {4\pi}\frac {||e||_{L^2}}{||R||_{L^2}}\leq 1 $$  for all Riemannian metrics on $M^3$. In particular, $||e||^*_{Th} \leq 1$.
\end{remark}

	In the case, where $M^3$ is hyperbolic,  the  relation between  the Thurston's norm and the  $L^2$-norm on $H^1(M^3;\R)$ was obtained by F. Brock and Nathan M. Dunfield \cite{BD} :

			\begin{equation}\label{Dunf1}
				\frac{\pi}{\sqrt{Vol(M^3)}}||*||_{Th}\leq ||*||_{L^2}\leq  \frac{10\pi}{\sqrt{inj(M^3)}}||*||_{Th}.
			\end{equation}

	\section {Proof of  Main Theorem}

			{\it Proof of Part a).}  According to \eqref{Chi1}, it is enough to prove that  $\F$  is  taut  if $H_0\leq \sqrt {-K_0}$. 
			
			 Let us suppose  $\F$  is not taut, then  it contains a generalized Reeb component $A$, which is bounded by tori (see section \ref{Int}).  Since  a  closed negative curvature 3-manifold  has a hyperbolic fundamenal group, it does not contain $\Z^2$ as a subgroup and hence does not contain  an  incompressible torus.  Thus,   by Theorem \ref{Nov} $\F$ contain a Reeb component $\mathcal R$.  
			
			 Let us consider   the universal covering $p: \widetilde M^3\to M^3$ with the pull back Riemannian metric on $ \widetilde M^3$. By the Cartan - Hadamard theorem, the exponential map $$\exp: T_x\widetilde M^3\to \widetilde M^3$$  is a diffeomorphism.
			
		Consider the following commutative diagram describing the embedding $i: \mathcal R \to M^3$ and its lift to the universal coverings:
			$$
			\begin{CD}
				\widetilde {\mathcal R} @>\widetilde i>>\widetilde{M}^3 \\
            @VVp V    @VVpV \\
			  {\mathcal R} @>i>> M^3 
			\end{CD} $$
			
			Let us denote $\bar {\mathcal R}:= \widetilde i(\widetilde {\mathcal R})$.
			
			{\it Case 1.  The lift $\bar { {\mathcal R}}$ of $\mathcal R$ to  $\widetilde{M}^3$  is homeomorphic  to the  solid torus  $D^2\times S^1$.  }

			In this case  we can find a ball $D(r)\subset \widetilde{M}^3$ of radius 
			$r$  containing $\bar {R}$ such that it's bounding sphere $S(r)=\partial B(r)$ touches the torus $T^2=\partial \bar {\mathcal R}$ at some point $q$. Clearly that $S(r)$ is a supporting sphere for $T^2$  at $q$. By  \eqref{ineq1} we have: $ H_r(q)\leq H(q)$, where     $ H_r(q)$ and $ H(q)$ are mean curvatures of $S(r)$ and $T^2$ at the point $q$. But $\forall  r>0 $ we have $$H_r(q) \geq  H^0_r=\sqrt{-K_0}\coth (r\sqrt{-K_0}) >\sqrt{-K_0},$$ 
			where $H^0_r$ is the mean curvature of the round sphere $S(r)$ of radius $r$ in the space of constant curvature $K_0$ (see   \eqref{ineq1}).
			
			This contradicts to the assumption on $H_0$ in the Part $a)$. 
			\begin{figure}
		{\includegraphics[scale=0.5]{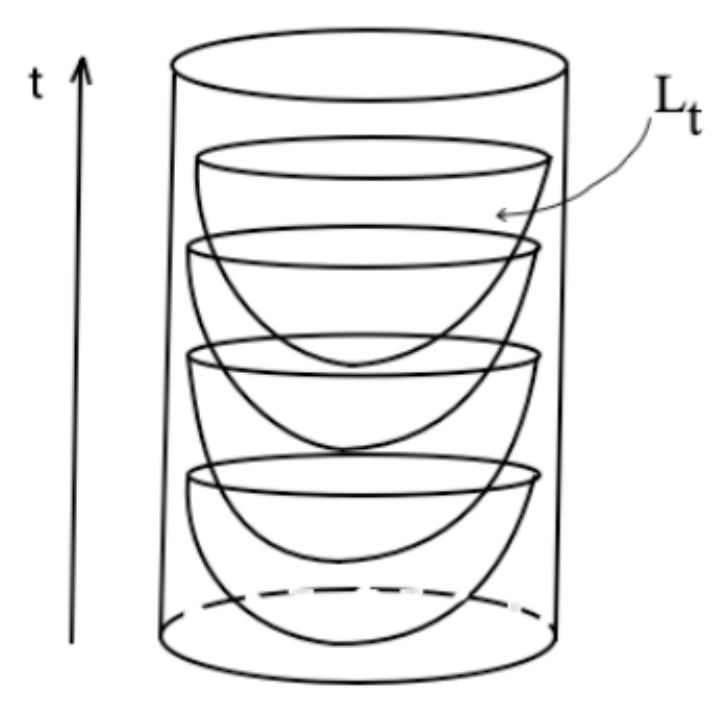}}
				\caption{ The universal covering $\widetilde {\mathcal R} $ }
		\label{Reeb3}
			\end{figure}
			
			{\it Case 2.  The lift $\bar { {\mathcal R}}$ of $\mathcal R$ to $\widetilde{M}^3$ is homeomorphic  to  the cylinder $D^2\times \R$.}

			Let us consider the disk $D\subset \bar {\mathcal R}$  corresponding  to the disk $D^2\times 0\subset D^2\times \R$  by the homeomorphism $\bar {\mathcal R}\simeq D^2\times \R$.   Consider the ball $B(r)\subset \widetilde{M}^3$ such that  $D\subset B(r)$ and its boundary   $S(r)=\partial B(r)$ is in general position with respect to the $ \partial \bar {\mathcal R}$. In this case the intersection $S(r)\cap \partial \bar {\mathcal R}$ is the family of circles ${\it C}= \{C_i\}$. Let  $A$ be a connected component  of the compact $B(r)\cap \bar {\mathcal R}$ containing $D$.  The intersection  $\partial A\cap S(r)$  consists of the disjoint family of compact connected surfaces ${\mathcal S}= \{S_i\} $  each of which has a boundary consisting of  circles of the family $\it C$. 	Since $B(r)\cap \bar{\mathcal R}$ is compact  there are two surfaces $\{S_1, S_2\}	\in \mathcal S$ belonging to the different connected components of $\bar {\mathcal R} \setminus D$.  
			
			Observe that the lifted foliation of  $$\Int \bar {\mathcal R}\cong \R^2\times \R$$ is  a direct product foliation of  $\R^2\times \R$  by  planes $L_t\simeq \R^2\times t$. 			
			Without loss of generality, we assume that a leaf $L_t$ goes to infinity if $t\to +\infty$ (see Fig. \ref{Reeb3}).  Then we can find the parameters $t_1,t_2$  such that   $L_{t_i}\cap S_i \not=\emptyset$, $i=1,2,$    and  $L_{t}\cap S_i =\emptyset$ for $t>t_i$,  $i=1,2.$   It is easy to show that each of the  surfaces $S_i$ divides the  $\bar {\mathcal R} $  into two connected components (see \cite{B1}). From the definition  of $S_1, S_2$   we conclude that at least one of the leaves $L_{t_i}, \ i=1,2,$ for instance  $L_{t_1}$,  must  intersect $D$.   But it means that  $L_{t_1}$ belongs to the same side of $S_1$ as $D$. It means that $S(r)$ is a supporting sphere for $L_{t_1}$ at the touch points $S_1\cap L_{t_1}$. The same reasoning as in Case $1 $ proves that the mean curvature of the leaf $L_{t_1}$ should be more $\sqrt{-K_0}$  at the touch points,  which leads to a contradiction.  This completes the proof of Part $a)$.
			\newline
			
			{\it Proof of Part b).} 
			 Let $0\not =\alpha \in \mathcal H^1(M^3)$. Pick a surface $\Sigma$  dual to 
			$[\alpha]\in H^1(M^3,\R)$ which is incompressible and realizes the Thurston norm, i.e.  $||[\alpha]||_{Th}=-\chi(\Sigma)$. Since $\Sigma$ is incompressible, by \cite {FHS}  we can assume that $\Sigma$ has least area in its isotopy class and hence is a stable minimal surface. 
			
			\begin{lemma} 
				\begin{equation} \label{9} Area (\Sigma)\leq  \frac{2\pi \chi(\Sigma)}{K_0}. \end{equation}
				\end{lemma}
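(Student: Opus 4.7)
The plan is straightforward: since $\Sigma$ is a minimal surface in a three-manifold whose sectional curvature is bounded above by $K_0 < 0$, the Gauss equation forces the intrinsic Gaussian curvature $K_\Sigma$ to satisfy $K_\Sigma \le K_0$ pointwise, after which a direct application of Gauss--Bonnet yields the claimed area bound. Stability of $\Sigma$ is not actually used in this step; only minimality and the upper curvature bound enter.

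First I would write the Gauss equation for the hypersurface $\Sigma\subset M^3$ at a point $p\in\Sigma$:
$$K_\Sigma(p) = K_M(T_p\Sigma) + \det A_p,$$
where $A_p$ is the second fundamental form. Minimality gives $\operatorname{tr} A_p = 0$, so the two principal curvatures are of the form $\kappa(p), -\kappa(p)$ and hence $\det A_p = -\kappa(p)^2 \le 0$. Combined with the hypothesis $K_M\le K_0$ (which applies to every tangent 2-plane), this yields the pointwise inequality
$$K_\Sigma(p) \le K_0 \quad\text{for every } p\in\Sigma.$$

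Next I would integrate this inequality over $\Sigma$ and apply Gauss--Bonnet:
$$2\pi\chi(\Sigma) \;=\; \int_\Sigma K_\Sigma\, dA \;\le\; K_0\cdot Area(\Sigma).$$
Since $K_0<0$, dividing by $K_0$ reverses the inequality and produces $Area(\Sigma)\le 2\pi\chi(\Sigma)/K_0$, which is the claim.

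There is no substantive obstacle here. The only implicit consistency check is that the right-hand side be non-negative, i.e.\ that $\chi(\Sigma)\le 0$. This is automatic from the $L^2$-minimizing, incompressible choice of $\Sigma$: an $S^2$ component would bound a ball in the irreducible manifold $M^3$ and could be discarded without altering the homology class, while a $T^2$ component would give $\int_{T^2}K_\Sigma=0$ and contradict the strict pointwise inequality $K_\Sigma\le K_0<0$. So every component of $\Sigma$ has negative Euler characteristic and the computation above concludes the proof.
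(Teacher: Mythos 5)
Your proof is correct and follows essentially the same route as the paper: minimality plus the Gauss equation gives $K_{\Sigma}\leq K|_{\Sigma}\leq K_0$ pointwise, and Gauss--Bonnet combined with $K_0<0$ yields the area bound. The extra remarks on why $\chi(\Sigma)\leq 0$ are a harmless addition not present in the paper's one-line argument.
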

			
			\begin{proof}
				Since $K_{\Sigma}\leq K|_{\Sigma}$ for a minimal surface, where $K_{\Sigma}$ denotes  the Gauss curvature of ${\Sigma}$,    the Gauss-Bonnet theorem yields $$-K_0 Area (\Sigma)\leq \int_{\Sigma}-K_{\Sigma}=-2\pi\chi(\Sigma).$$
						
				\end{proof}

			\begin{lemma}\label{10}
			$	||\alpha||_{L^2} \leq  -	\frac{2\pi\Lambda} {K_0\sqrt{Vol (M^3)}} ||\alpha||_{Th}.$
				\end{lemma}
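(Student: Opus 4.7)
The plan is to use Poincar\'e duality to rewrite $||\alpha||^2_{L^2}$ as a surface integral over $\Sigma$, and then close up the argument by applying the sup-norm bound \eqref{Pet3} together with the area estimate \eqref{9}.

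First, I would exploit that $\Sigma$ is dual to $[\alpha]$, so $[\Sigma]=PD([\alpha])\in H_2(M^3;\R)$. The defining property of Poincar\'e duality at the form level yields, for every closed $2$-form $\beta$,
\[
\int_{M^3}\alpha\wedge\beta \;=\;\int_\Sigma \beta.
\]
Since $\alpha$ is harmonic, the $2$-form $*\alpha$ is closed (the condition $\delta\alpha=0$ is equivalent to $d{*\alpha}=0$). Plugging $\beta=*\alpha$ gives the key identity
\[
||\alpha||_{L^2}^2 \;=\;\int_{M^3}\alpha\wedge *\alpha \;=\;\int_\Sigma *\alpha.
\]

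Next, since the Hodge star is a pointwise isometry on forms, $|{*\alpha}|_p=|\alpha|_p$ at every point, and therefore
\[
\int_\Sigma *\alpha \;\leq\; ||\alpha||_{L^\infty}\cdot Area(\Sigma).
\]
I would then apply \eqref{Pet3} to bound $||\alpha||_{L^\infty}$ by $\tfrac{\Lambda}{\sqrt{Vol(M^3)}}||\alpha||_{L^2}$, and \eqref{9} to bound $Area(\Sigma)$ by $\tfrac{2\pi\chi(\Sigma)}{K_0}$ (which is positive since $K_0<0$ and $\chi(\Sigma)<0$). Combining these ingredients gives
\[
||\alpha||_{L^2}^2 \;\leq\; \frac{2\pi\Lambda\,\chi(\Sigma)}{K_0\sqrt{Vol(M^3)}}\,||\alpha||_{L^2}.
\]
Dividing by $||\alpha||_{L^2}>0$ and substituting $\chi(\Sigma)=-||\alpha||_{Th}$ from the choice of $\Sigma$ as a Thurston-norm minimizer delivers exactly the claimed inequality.

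The main obstacle I foresee is justifying the Poincar\'e duality identity $\int_{M^3}\alpha\wedge\beta=\int_\Sigma\beta$ when $[\alpha]$ is not an integral class, since an embedded surface $\Sigma$ strictly represents only classes in the integer lattice. One resolves this by first establishing the inequality for classes in $H^1(M^3;\Z)_{\R}$, where a genuine surface $\Sigma$ realizing the Thurston norm exists, and then extending to all real classes by homogeneity of $||\cdot||_{L^2}$ and $||\cdot||_{Th}$ together with density of integer rays in $H^1(M^3;\R)$.
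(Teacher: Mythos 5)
Your proposal is correct and follows essentially the same route as the paper: Poincar\'e duality to convert $\|\alpha\|_{L^2}^2=\int_{M^3}\alpha\wedge*\alpha$ into $\int_\Sigma *\alpha$, the pointwise comass bound $\int_\Sigma *\alpha\le \|\alpha\|_{L^\infty}\,Area(\Sigma)$, and then \eqref{Pet3} together with the Gauss--Bonnet area estimate \eqref{9}, finishing with $\chi(\Sigma)=-\|\alpha\|_{Th}$. Your closing remark on extending from integral to real classes by homogeneity and density is a point the paper leaves implicit, but it does not change the argument.
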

				
				\begin{proof} 
					$$ 
					||\alpha||^2_{L^2} =\int_{M^3} \alpha\wedge*\alpha\overset{PD}=\int_{\Sigma}*\alpha\leq \int_{\Sigma}|*\alpha|dA = $$$$=\int_S|\alpha|dA \leq  \int_{\Sigma}||\alpha||_{\infty}dA \leq ||\alpha||_{\infty}Area(\Sigma) \overset{\eqref {9}+ \eqref{Pet3}}\leq 	\frac{2\pi\chi(\Sigma)\Lambda} {K_0\sqrt{Vol (M^3)}}||\alpha||_{L^2}. 
					$$
					
					Recalling that   $||[\alpha]||_{Th}:=||[\Sigma]||_{Th}=-\chi(\Sigma)$ the result follows. 
					\end{proof}
			Taking $V_0=Vol(M^3)$,  from \eqref{euler1} and Lemma \ref{10} we obtain:
			
			$$||e(T\F)||_{Th}= \sup_{[\Sigma]\not =0}\frac{<e(T\F), [\Sigma]>}{||[\Sigma ]||_{Th}}\leq \sup_{[\Sigma]\not =0} -	\frac{2\pi\Lambda}  {K_0\sqrt{Vol(M^3)}}\frac{<e(T\F), [\Sigma]>}{||[\Sigma ]||_{L^2}}\leq$$$$ \leq 6\frac{k_0}{K_0}\Lambda   -   \frac{64\pi H^2_0Vol(M^3)} {3K_0C^3_0}\Lambda^2.$$ 
			Now \eqref{euler3} follows for arbitrary $V_0\geq Vol(M^3)$. This proves Part b) of Theorem \ref{Main}.
			\newline 
			
			{\it Proof of Part c).}	
			
			 In \cite{B2}  the following estimate was obtained.
			
				\begin{equation}\label{euler}
				|e(T\F)([\Sigma])|  \leq \frac{1}{2\pi}||\alpha||_{L^2}||R^-||_{L^2}  +   \frac{32 H^2_0V_0^2} {3C^3_0}|| \alpha||_{L^{\infty}} . 
			\end{equation}

			Applying  \eqref{Dunf1} and \eqref{Dunf2} to \eqref{euler} and, considering that $R^-=-6$, we obtain:
			$$
			||e(T\F)||_{Th}  \leq 30\frac{\sqrt{Vol(M)}}{\sqrt{inj ( M)}}  +  \frac{ H^2_0V_0^2} {3C^3_0}\frac{1600\pi}{inj(M)}. 
			$$
			that yields part c) of Theorem \ref{Main}.

%===============================================================================
%   Bibliography
%===============================================================================

% include bibliography:
%\bibliographystyle{plainurl}
%\bibliography{mybiblio}

\begin{thebibliography}{10}
	
	\bibitem{B2}
	Dmitry Bolotov.
	\newblock The $l^2$ -norm of the euler class for foliations on closed
	irreducible riemannian 3-manifolds.
	\newblock URL: \url{https://arxiv.org/abs/2212.06807}.
	
	\bibitem{B1}
	Dmitry Bolotov.
	\newblock Foliations on closed three-dimensional riemannian manifolds with a
	small modulus of mean curvature of the leaves.
	\newblock {\em Izv. RAN. Ser. Mat.}, 86:85--102, 2022.
	
	\bibitem{B11}
	Dmitry Bolotov.
	\newblock On foliations of bounded mean curvature on closed three-dimensional
	riemannian manifolds.
	\newblock {\em Proceedings of the International Geometry Center},
	16(2):173--182, 2023.
	
	\bibitem{BD}
	Dunfield Nathan~M. Brock, Jeffrey~F.
	\newblock Norms on the cohomology of hyperbolic 3-manifolds.
	\newblock {\em Invent. math.}, 210:531–558, 2017.
	
	\bibitem{Croke}
	Christopher~B. Croke.
	\newblock Some isoperimetric inequalities and eigenvalue estimates.
	\newblock {\em Annales scientifiques de l'\'Ecole Normale Sup\'erieure},
	13(4):419--435, 1980.
	
	\bibitem{BZ}
	Victor~Zalgaller Dmitry~Burago.
	\newblock {\em Introduction to Riemannian Geometry [in Russian]}.
	\newblock Nauka. St. Petersburg, 1994.
	
	\bibitem{ES}
	James  Eells and  J.H. Sampson.
	\newblock Harmonic mappings of riemannian manifolds.
	\newblock {\em Am. J. Math.}, 86:621--657, 1964.
	
	\bibitem{FHS}
	Joel Scott, ~Peter Freedman, Michael~Hass.
	\newblock Least area incompressible surfaces in 3-manifolds.
	\newblock {\em Invent. Math.}, 71:609--642, 1983.
	
	\bibitem{Good}
	Sue~E Goodman.
	\newblock Closed leaves in foliations of codimension one.
	\newblock {\em Comm. Math. Helv.}, 50:383--388, 1975.
	
	\bibitem{KM}
	Peter Kronheimer and Tomasz Mrowka.
	\newblock Scalar curvature and the thurston norm.
	\newblock {\em Mathematical Research Letters}, 4:931–937, 1997.
	
	\bibitem{Nov}
	Sergej~Petrovich Novikov.
	\newblock Topology of foliations.
	\newblock {\em Trans. Moscow. Math. Soc.}, 14:268--304, 1967.
	
	\bibitem{Pet}
	Peter Petersen.
	\newblock {\em Riemannian Geometry}.
	\newblock Graduate Text in Mathematics, 3rd Edition. Springer, 2016.
	
	\bibitem{R}
	Geprges Reeb.
	\newblock Sur certaines proprietes topologiques des var\'i\'et\'es feuilletees.
	\newblock {\em Acrual Sci. Ind., Hermann, Paris}, 1183, 1952.
	
	\bibitem{Ros}
	Herold Rosenberg.
	\newblock Foliations by planes.
	\newblock {\em Topology}, 7:131--138, 1968.
	
	\bibitem{St}
	Daniel~L. Stern.
	\newblock Scalar curvature and harmonic maps to $s^1$.
	\newblock {\em J. Differential Geom.}, 122(2):259--269, 2022.
	
	\bibitem{Sul}
	Dennis Sullivan.
	\newblock A homological characterization of foliations consisting of minimal
	surfaces.
	\newblock {\em Comm. Math. Helv.}, 54:218--223, 1979.
	
	\bibitem{T}
	Itiro Tamura.
	\newblock {\em Topology of foliations: an introduction}, volume~97 of {\em
		Translation of Mathematical Monographs}.
	\newblock American Mathematical Society, Providence, RI, 1992.
	
	\bibitem{Th}
	William Thurston.
	\newblock A norm for the homology of 3-manifolds.
	\newblock {\em Memoirs of the American Mathematical Society}, 59(339):99--130,
	1986.
	
\end{thebibliography}

%===============================================================================
%   Information about authors
%===============================================================================

%\printArticleAuthorsInfo{\thearticlesnum}

\end{document}